\newtheorem{theorem}{Theorem}[section]
\newtheorem{algorithm}[theorem]{Algorithm}
\journal{ArXiv.org}
\begin{document}

\begin{frontmatter}

\title{A direction splitting scheme for Navier-Stokes-Boussinesq system in spherical shell geometries}


\author[mymainaddress]{Aziz Takhirov}
\cortext[mycorrespondingauthor]{Corresponding author}
\ead{takhirov@ualberta.ca}

\author[mymainaddress]{Roman Frolov}
\ead{frolov@ualberta.ca}

\author[mymainaddress]{Peter Minev\corref{mycorrespondingauthor}}
\ead{pminev@ualberta.ca}

\address[mymainaddress]{Department of Mathematical and Statistical Sciences, University of Alberta, Edmonton, AB, T6G 2G1, Canada}

\begin{abstract}
  This paper introduces a formally second-order direction-splitting method for
  solving the incompressible Navier-Stokes-Boussinesq system in a spherical shell region.
  The equations are solved on overset Yin-Yang grids, combined with spherical coordinate transforms. This approach allows to avoid the singularities at the poles 
  and keeps the grid size relatively uniform. The downside is that the spherical shell is subdivided into two equally sized, overlapping subdomains that requires
  the use of Schwarz-type iterations. The temporal second order accuracy is achieved via an Artificial 
  Compressibility  (AC) scheme with bootstrapping (see \cite{doi:10.1137/140975231,Guermond201792},). The spatial discretization is based on 
  second order finite differences on the Marker-And-Cell (MAC) stencil. The entire scheme is implemented in parallel using a domain decomposition
  iteration, and a direction splitting approach for the local solves.  The stability, accuracy and weak scalability of the method is verified on a manufactured solution of
  the Navier-Stokes-Boussinesq system and on the Landau solution of the Navier-Stokes equations on the sphere.
\end{abstract}

\begin{keyword}
Splitting methods, Navier-Stokes equations on the sphere, Parallel algorithm.
\end{keyword}

\end{frontmatter}


\section{Introduction} 
This article presents a new direction-splitting scheme for solving the incompressible Navier-Stokes-Boussinesq system:
\begin{equation}
\label{eq:eq_ns}
\begin{aligned}
\frac{\partial \bu}{\partial t}+(\bu \cdot \GRAD) \bu + \GRAD p - \Pr \LAP \bu & 
= \bg \Pr \Ra T \text{ in }~\Omega \times (0, T_{f}]\\
\nabla \cdot  \bu&=0 \text{ in }~\Omega \times (0, T_{f}] \\
\bu & = \mathbf{0}\text{ on }~ \partial \Omega \times (0, T_{f}]
\end{aligned}
\end{equation}
\begin{equation}
\label{eq:eq_heat_convec}
\begin{aligned}
\frac{\partial T}{\partial t}+(\bu \cdot \GRAD) T - \LAP T &
= 0 \text{ in }~\Omega \times (0, T_{f}] \\
T & = 0 \text{ on }~ \partial \Omega \times (0, T_{f}]
\end{aligned}
\end{equation}
in a spherical shell domain that can be defined in terms of a spherical coordinate triple $\left(r,\theta,\phi \right)$ as:
$$\Omega = \left\lbrace \left(r,\theta,\phi \right) \in [R_1, R_2] \times \left[ 0, \pi \right] \times \left[ 0, 2 \pi \right) \right\rbrace.$$
In the above, $\bg$ is the unit vector in the direction of gravity, and $\Pr, \Ra$ are the Prandtl and Rayleigh numbers, respectively. 
The system \eqref{eq:eq_ns}-\eqref{eq:eq_heat_convec} models the flow of a heat conducting fluid, 
under the assumption that the temperature-induced density variation influences significantly only the buoyancy force and the fluid
remains incompressible. It is widely applied to model the flow in the atmospheric boundary layer (\cite{MKMMKVGHJ2016}), oceanic flows (\cite{SH2006}), as well as,
if combined with an equation for the magnetic field, the flow in the Earth's dynamo (\cite{SJNF2017}).
Even though for the most part of the discussion, we assume
homogeneous Dirichlet boundary conditions on the two spherical surfaces $r=R_1, r=R_2$, the approach is applicable to Neumann and Robin boundary conditions as well.

One widely used  approach for numerical approximations of differential equations in spherical shell geometries is based on the use of a
spherical transformation that transforms the domain into a parallelepiped. 
The obvious advantage of this approach is the simple computational domain, which allows for the 
use of structured grids and the efficient schemes developed for them. Moreover, the grid can naturally follow the
geometry of the domain, without requiring too many cells, as would possibly be in the case of a Cartesian formulation.
However, the singularity of the transformation and the grid convergence near the poles
have for many years been a difficulty in the development of accurate finite difference 
and pseudo-spectral schemes. Several different treatments have been proposed for dealing with 
these problems. 
For example,  in \cite{HUANG1993254}, the pole singularity issue is avoided by replacing the equations at the poles with equations, analogous to 
boundary conditions, while in \cite{MOHSENI2000787}, a redefinition of the singular coordinates is proposed.
Other suggested approaches include applying L'Hospital's rule \cite{GRIFFIN1979352} to singular terms and
switching to Cartesian formulations around the poles \cite{doi:10.2514/6.1997-760}.
On the other hand, the grid convergence has been a more serious problem. In particular, it produces a solution with
uneven resolution, requires very small time steps for explicit or IMEX schemes since the time step size is limited by the minimum grid size, 
and causes convergence problems for iterative solvers. 
Therefore, different grid systems have been suggested in the literature that give quasi-uniform resolution and 
avoid the grid convergence problems. One such approach is the "cubed sphere" of \cite{RONCHI199693}, which is a grid that covers a 
spherical surface with six components corresponding to six faces of a cube. Even though, the resulting grid 
is quasi-uniform, it still has singularities at the corner points of the faces and it is non-orthogonal. 
Some of the other suggested unstructured grids include the isocahedral grid of \cite{Baumgardner1985} and 
non-orthogonal rhombahedral grid of \cite{doi:10.1029/2000JB900003}. 

In this study we adopt an alternative approach, proposed by \cite{doi:10.1029/2004GC000734}, employing the so-called 
Yin-Yang grids.   It starts with a decomposition of the domain into two overlapping subdomains, 
combined with  two different spherical transforms whose axes are perpendicular to each other, cf. Fig. \ref{fig:YYgrid}.
As a result, both subdomains are transformed into identical parallelepipeds that can be gridded with the same uniform grids.
This approach automatically removes the transforms singularities at the poles,
at the expense of the introduction of two subdomains, so that the two local solutions must be coupled by means of Schwarz-type iterations. 
It has been used for simulations of mantle convection \cite{TACKLEY20087}, 
core collapse supernovae \cite{refId0}, atmospherical general circulation model \cite{doi:10.1175/2010MWR3375.1}
and visualization in spherical regions \cite{OHNO20091534}.
Some advantages of Yin-Yang approach are that the metric tensors are simple, the resolution is quasi-uniform, and it requires modest 
programming effort for extending the code from a single latitude-longitude grid.
The main novelty of this paper is that the Yin-Yang domain decomposition is combined with a direction splitting time discretization that, in case of linear parabolic equations,
is unconditionally stable on grids on the spherically transformed domains.  The advection can be
included either in an IMEX fashion, or by including the linearized advection operator into the entire operator that is further split direction-wise.
The resulting splitting scheme is conditionally stable, since the direction-wise operators are not positive, but our numerical experience
demonstrated that the second approach yields an algorithm that has better stability performance.  This is why the rest of the paper 
concerns only this type of schemes.
To our knowledge, the stability of the direction splitting approach has not been rigorously studied in the context a spherical coordinate system.
Therefore, we prove below that it is unconditionally stable in case of a scalar heat equation, in a simply shaped domain (in terms of
spherical coordinates).  The case of the full Navier-Stokes-Boussinesq system is more involved and we do not provide
a rigorous proof here.  However, our numerical experience shows that the a direction splitting is still unconditionally stable if the
advection terms are omitted and if the velocity-pressure decoupling is done via the AC method proposed in \cite{doi:10.1137/140975231}.

The rest of the paper is organized as follows. In the next section, we briefly recall the definition of the Yin-Yang domain decompostion. 
In Section $3$ we present the numerical scheme for the advection-diffusion and Navier-Stokes equations on each of the subdomains.
In Section $4$, we discuss the implementation details, and in Section $5$ we present the numerical experiments.
\section{Spatial discretization and the Yin-Yang grid}
In this section, we briefly recall the definition of the composite Yin-Yang grid following \cite{doi:10.1029/2004GC000734}.
The grid consists of two identical overlapping latitude-longitude grids whose axes are perpendicular to each other. 
The Yin grid is based on a spherical transformation 
\begin{align*}
\begin{cases}
 x &= r \sin \theta \cos \phi \\
 y &= r \sin \theta \sin \phi \\
 z &= r \cos \theta ,
 \end{cases}
\end{align*}
and covers the region 
$$ \Omega_1: = \left\lbrace \left(r,\theta,\phi \right) \in [R_1, R_2] \times \left[ \frac{\pi}{4}-\varepsilon, \frac{7\pi}{4}+\varepsilon \right] \times 
\left[ \frac{\pi}{4}-\varepsilon, \frac{3\pi}{4}+\varepsilon \right]  \right\rbrace, $$
where $\varepsilon \ll 1$ is a parameter determining the overlap. The Yang grid is obtained via another spherical transformation:
\begin{align*}
\begin{cases}
 x &= - r \sin \ttheta \cos \tphi \\
 y &= r \cos \ttheta \\
 z &= r \sin \ttheta \sin \tphi,
 \end{cases}
\end{align*}
such that its axes is perpendicular to the axes of the Yin transform, and covers the region
$$ \Omega_2: = \left\lbrace \left(r,\ttheta,\tphi \right) \in [R_1, R_2] \times \left[ \frac{\pi}{4}-\varepsilon, \frac{7\pi}{4}+\varepsilon \right] \times 
\left[ \frac{\pi}{4}-\varepsilon, \frac{3\pi}{4}+\varepsilon \right]  \right\rbrace. $$  The choice of the second axes should be such that the
Yang grid fully covers the gap of the Yin one, and the overlapping subregions are of the same size (see Fig. \ref{fig:YYgrid},).  Otherwise, it is identical to the Yin grid modulo two rotations. The resulting Yin-Yang grids are quasiuniform,
the coordinate transformations from $(r, \theta, \phi)$ to $(r, \ttheta, \tphi)$  and its inverse, as well as the metric tensors on both grids are identical.
As a consequence,  the methods and codes developed for the standard latitude-longitude grid can be applied 
to both grids. 
\begin{figure}[htbp]
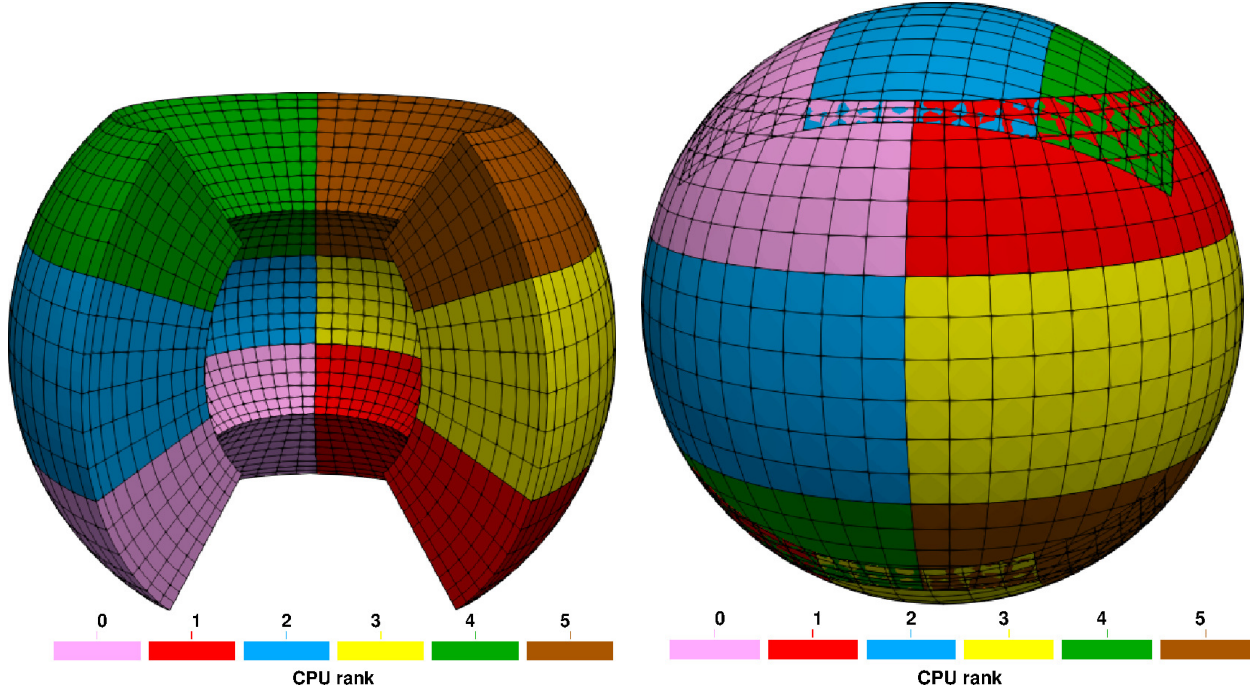

\centering
\begin{minipage}[b]{0.49\linewidth}
\centering
\includegraphics[width=\textwidth]{./Pics/Yang}
\end{minipage}
\hspace{0.1cm}
\begin{minipage}[b]{0.49\linewidth}
\centering
\includegraphics[width=\textwidth]{./Pics/YinYang}
\end{minipage}
\caption{Yang (left) and Yin-Yang (right) grids.  Each subgrid is further decomposed into blocks for a parallel implementation corresponding to a CPU distribution $1\times3\times2$.}
\label{fig:YYgrid}
\end{figure}
\section{Direction-splitting time discretization}
\subsection{Preliminaries}
In the sequel of the paper we will frequently make use of the following notations.  For a time sequence $w^k, k=1, 2,\dots$
we denote the average between two time levels as $w^{k+1/2}=(w^{k+1}+w^k)/2$, and the explicit extrapolation to level $k+1/2$
by $w^{*,k+1/2}=3w^{k}/2-w^{k-1}/2$.  For two regular enough functions $u, v$ defined in the spherical shell we denote their
 weighted $L^2$ inner product as:
$(u,v)_{\omega}: = \int \limits_{R_1}^{R_2} \int  \limits_{0}^{\pi} \int  \limits_{0}^{2 \pi}  u v \omega d r d \theta d \phi, $
where $\omega$ denotes a non-negative weight.  In most cases the weight is given by the weight of the spherical transform $\omega = r^2 \sin \theta$, 
however, in some of the estimates given below, the weight will be appropriately modified.
The corresponding norm is given by $\| u\|_{\omega}^2 = (u,u)_{\omega}$.

\subsection{Direction splitting of the advection-diffusion equation}
Since the PDEs are identical in both domains, it is sufficient to develop the numerical scheme for the Yin domain. 
Then the Schwarz domain decomposition method can be used to iterate between the subdomains.

We first present Douglas \cite{Douglas1962} type direction splitting scheme for the heat equation. 
Consider
\begin{equation}
\label{eq:eq_temp}
\begin{aligned}
\partial_t T - \kappa \LAP T &= 0  \text{ in }~\Omega_1 \times (0, T_{f}],\\
T &=0 \text{ on }~ \partial \Omega_1 \times (0, T_{f}],
\end{aligned}
\end{equation} 
where the  Laplacian in spherical coordinates is given by
$$ \Delta  = \Drr + \Dthetatheta + \Dphiphi, \Drr: = \frac{1}{r^2}\dr \left(r^2 \dr \right), 
\Dthetatheta: = \frac{1}{r^2 \sin \theta} \partial_\theta\left( \sin \theta \partial_\theta \right), 
\text{ and } \Dphiphi: = \frac{\dphiphi}{r^2 \sin^2 \theta}.$$
The Douglas direction splitting scheme for this equation can be summarized in the following factorized form:
\begin{align}
\left[ \mathrm{I} - \frac{\dt}{2} \Drr \right] \left[ \mathrm{I} - \frac{\dt}{2} \Dthetatheta \right]
\left[ \mathrm{I} - \frac{\dt}{2} \Dphiphi \right] \frac{\delta T^{n+1}}{\dt} = 
 \Delta T^{n} ,
\label{eq:HeatDouglas0}
\end{align}
where $ \delta T^{n+1} : = T^{n+1} - T^n$ denotes the first time difference of the time sequence $T^k$, $\dt$ is the time step, and $ \mathrm{I}$
is the identity operator.
We first notice that this splitting can be considered as an Euler explicit scheme whose time difference operator is
multiplied by $\left[ \mathrm{I} - \frac{\dt}{2} \Drr \right] \left[ \mathrm{I} - \frac{\dt}{2} \Dthetatheta \right]
\left[ \mathrm{I} - \frac{\dt}{2} \Dphiphi \right] $, that is a consistent perturbation of $ \mathrm{I}$ and stabilizes the scheme.
If the spatial derivative operators are positive and commute with respect to some inner product, the stability of this scheme is not hard to establish.

Unfortunately, $\Drr, \Dthetatheta,$ and $\Dphiphi$ do not commute with respect to the weighted product $(.,.)_\omega$, 
and their positivity is far from being clear.  The main obstacle to commutativity of the
one-dimensional operators comes from the non-constant terms in the denominators of $\Dthetatheta$ and $\Dphiphi$. 
Therefore, the scheme \eqref{eq:HeatDouglas0} should be modified as follows. We first introduce the modified spatial operators:
\begin{equation*}
\DHthetatheta : = \frac{1}{R_1^2 \sin \theta} \partial_\theta\left( \sin \theta \partial_\theta \right), \DHphiphi := \frac{\dphiphi}{R_1^2 \sin^2 \theta_1}, 
\ \text{ and } \hat{\LAP} := \Drr + \DHthetatheta + \DHphiphi,
\end{equation*}
where $\displaystyle \theta_1 = \frac{\pi}{4}-\varepsilon $.
Then it is easy to check that $\Drr, \DHthetatheta$ and $\DHphiphi$, supplied with zero Dirichlet boundary conditions, commute. Moreover,
\begin{equation}
-\left( \DHthetatheta T,T \right)_{\omega} \ge 0, \quad
 - \left( \DHphiphi T,T \right)_{\omega}  \ge 0 \label{eq:HeatStab2}
\end{equation}
and
\begin{equation}
- \left( \left[\DHthetatheta - \Dthetatheta \right]T,T \right)_{\omega}  \ge 0 \text{ and } 
 - \left( \left[\DHphiphi - \Dphiphi \right]T,T \right)_{\omega}  \ge 0. 
 \label{eq:HeatStab1}
\end{equation}
These inequalities immediately yield that:
\begin{equation}
\left( -\hat{\LAP} T, T \right)_{\omega}  \ge \left( - \LAP T, T \right)_{\omega} .
 \end{equation}
In order to obtain an unconditionally stable second order scheme, we start from the second order Adams-Bashforth scheme:
\begin{equation*}
 \frac{\delta T^{n+1}}{\dt} = 
 \Delta T^{*,n+1/2},
 \label{eq:AB2}
\end{equation*}
 and stabilize it by multiplying the time difference 
in the left hand side by $\left[ \mathrm{I} - \frac{\dt}{2} \Drr \right] \left[ \mathrm{I} - \frac{\dt}{2} \DHthetatheta \right] \left[ \mathrm{I} - \frac{\dt}{2} \DHphiphi \right] .$
Since this perturbation is only first order consistent with the identity operator, we
subtract from the right hand side the first order perturbation term, taken at the previous time level.  The resulting splitting scheme reads:
\begin{align}
\left[ \mathrm{I} - \frac{\dt}{2} \Drr \right] \left[ \mathrm{I} - \frac{\dt}{2} \DHthetatheta \right]
\left[ \mathrm{I} - \frac{\dt}{2} \DHphiphi \right] \frac{\delta T^{n+1}}{\dt} = 
 \Delta T^{*,n+1/2} -  \frac{1}{2} \hat{\Delta} \delta T^n .
\label{eq:HeatDouglas}
\end{align}
Note that, assuming enough regularity of the exact solution in space and time, this is a second-order perturbation of the second-order explicit Adams-Bashforth scheme \eqref{eq:AB2},
the perturbation being given by:
$$
\frac{\dt^2}{2} \hat{\Delta} \frac{\delta^2 T^{n+1}}{\dt^2} +  \left[ \frac{\dt^2}{4} (\Drr \DHthetatheta + \Drr \DHphiphi + \DHthetatheta \DHphiphi)
-  \frac{\dt^3}{8}  \Drr \DHthetatheta  \DHphiphi \right]  \frac{\delta T^{n+1}}{\dt} .
$$
We have the following stability result for the scheme \eqref{eq:HeatDouglas}.
\begin{theorem}
Assuming enough regularity of the exact solution $T$ of the semi-discrete scheme \eqref{eq:HeatDouglas}, it is unconditionally stable; more precisely, it satisfies the following estimate:
\begin{align}
\tau \sum \limits_{n=1}^{N-1} \frac{\| T^{n+1} - T^n \|^2_{\omega}}{\tau^2} &+ 
\frac{1}{2} \|\nabla T^{N} \|^2_{\omega} + \frac{1}{4} \left( \|\partial_\theta \left( T^N - T^{N-1} \right) \|^2_{\omega_1} + 
\|\partial_\phi \left( T^N - T^{N-1} \right) \|^2_{\omega_2} \right) \label{eq:Stability} \\
& \le \frac{1}{2} \|\nabla T^{1} \|^2_{\omega} + \frac{1}{4} \left( \|\partial_\theta \left( T^{1} - T^{0} \right) \|^2_{\omega_1} + \|\partial_\phi \left( T^{1} - T^{0} \right) \|^2_{\omega_2} \right), \notag
\end{align}
where $\omega_1 = \left( 1- \frac{r^2}{R^2_1} \right) \sin \theta \ge 0$ and 
$\omega_2 = \left( \frac{r^2}{R^2_1} - 1 \right) \frac{\sin \theta}{\sin^2 \theta_1} \ge 0$.
\end{theorem}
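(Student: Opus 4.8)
The plan is to run a discrete energy argument, testing the scheme \eqref{eq:HeatDouglas} against the first time difference $\delta T^{n+1}$ in the weighted product $(\cdot,\cdot)_\omega$, and to chase the telescoping structure. First I would record the two structural facts that drive everything. Each of $\Drr,\DHthetatheta,\DHphiphi$ (with homogeneous Dirichlet data) is self-adjoint with respect to $(\cdot,\cdot)_\omega$: this is integration by parts in the relevant variable, using that $\omega=r^2\sin\theta$ casts each operator in Sturm--Liouville form. Combined with the stated commutativity and the positivity \eqref{eq:HeatStab2}, it follows that any product of $-\Drr,-\DHthetatheta,-\DHphiphi$ is again self-adjoint and positive semidefinite (write one factor as $S^2$ through its square root and carry $S$ across the inner product). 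I would also set $a(u,v):=(-\LAP u,v)_\omega$, which is symmetric with $a(u,u)=\|\nabla u\|_\omega^2$.

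Next I would expand the factored left operator as $\mathrm I-\tfrac{\dt}{2}\hat{\LAP}+\tfrac{\dt^2}{4}\mathcal Q-\tfrac{\dt^3}{8}\mathcal R$ with $\mathcal Q=\Drr\DHthetatheta+\Drr\DHphiphi+\DHthetatheta\DHphiphi$ and $\mathcal R=\Drr\DHthetatheta\DHphiphi$. Testing with $\delta T^{n+1}$ and dividing by $\dt$, the leading part gives $\tfrac1\dt\|\delta T^{n+1}\|_\omega^2$, the stabiliser contributes the crucial $O(1)$ term $\tfrac12(-\hat{\LAP}\,\delta T^{n+1},\delta T^{n+1})_\omega$, and the remaining $\tfrac{\dt}{4}(\mathcal Q\,\delta T^{n+1},\delta T^{n+1})_\omega+\tfrac{\dt^2}{8}(-\mathcal R\,\delta T^{n+1},\delta T^{n+1})_\omega$ are nonnegative by the preceding paragraph and may be discarded, turning the identity into an inequality in the right direction.

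For the right-hand side I would use $T^{*,n+1/2}=T^n+\tfrac12\delta T^n$ to write $\LAP T^{*,n+1/2}-\tfrac12\hat{\LAP}\delta T^n=\LAP T^n+\tfrac12(\LAP-\hat{\LAP})\delta T^n$. The term $(\LAP T^n,\delta T^{n+1})_\omega=-a(T^n,\delta T^{n+1})$ is handled by polarization, yielding $\tfrac12\|\nabla T^n\|_\omega^2-\tfrac12\|\nabla T^{n+1}\|_\omega^2+\tfrac12\|\nabla\delta T^{n+1}\|_\omega^2$: the first two pieces telescope, and the third is \emph{exactly} the full-gradient part hidden inside the stabiliser, since $\tfrac12(-\hat{\LAP}\,\delta T^{n+1},\delta T^{n+1})_\omega=\tfrac12\|\nabla\delta T^{n+1}\|_\omega^2+\tfrac12(-[\hat{\LAP}-\LAP]\delta T^{n+1},\delta T^{n+1})_\omega$. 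I expect this cancellation to be the main obstacle: spotting that the apparently dangerous $O(1)$ term $\tfrac12\|\nabla\delta T^{n+1}\|_\omega^2$ generated on the right is absorbed precisely by the gradient content of the splitting stabiliser on the left is what makes the argument close without a step-size restriction.

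Finally I would split $\hat{\LAP}-\LAP=(\DHthetatheta-\Dthetatheta)+(\DHphiphi-\Dphiphi)$ and integrate by parts in $\theta$, resp.\ $\phi$, to identify $(-[\DHthetatheta-\Dthetatheta]\psi,\psi)_\omega=\|\partial_\theta\psi\|_{\omega_1}^2$ and $(-[\DHphiphi-\Dphiphi]\psi,\psi)_\omega\ge\|\partial_\phi\psi\|_{\omega_2}^2$, the weights $\omega_1,\omega_2$ emerging directly from the coefficient differences and being nonnegative because $r\ge R_1$ and $\sin\theta\ge\sin\theta_1$ on the Yin range of $\theta$ (this is also the content of \eqref{eq:HeatStab1}). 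The surviving cross term $\tfrac12(-[\hat{\LAP}-\LAP]\delta T^n,\delta T^{n+1})_\omega$ is then split by Young's inequality for these positive semidefinite forms, $b(\delta T^n,\delta T^{n+1})\le\tfrac12 b(\delta T^n,\delta T^n)+\tfrac12 b(\delta T^{n+1},\delta T^{n+1})$, distributing it symmetrically across the two consecutive levels. Collecting everything leaves the telescoping inequality $\tfrac1\dt\|\delta T^{n+1}\|_\omega^2+E^{n+1}\le E^n$ with $E^n=\tfrac12\|\nabla T^n\|_\omega^2+\tfrac14\|\partial_\theta\delta T^n\|_{\omega_1}^2+\tfrac14\|\partial_\phi\delta T^n\|_{\omega_2}^2$, and summation over $n=1,\dots,N-1$ produces exactly \eqref{eq:Stability}.
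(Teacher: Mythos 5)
Your proposal is correct and follows essentially the same energy argument as the paper: test against $\delta T^{n+1}$, telescope the gradient norm via the Crank--Nicolson structure, telescope the $(\Delta-\hat{\Delta})$-seminorms of the increments, and discard the nonnegative $O(\dt)$ and $O(\dt^2)$ cross terms. The only cosmetic differences are that the paper groups the right-hand side as $\Delta T^{n+1/2}-\tfrac{1}{2}(\Delta-\hat{\Delta})(T^{n+1}-2T^n+T^{n-1})$ and applies the polarization identity to that second difference (rather than your split $\Delta T^{n}+\tfrac{1}{2}(\Delta-\hat{\Delta})\delta T^{n}$ followed by the cancellation of $\tfrac{1}{2}\|\nabla\delta T^{n+1}\|_{\omega}^2$ and Young's inequality, which yields the identical telescoping), and that it evaluates the mixed-derivative cross terms explicitly by integration by parts instead of invoking commutativity and operator square roots.
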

\begin{proof}
Expanding the left hand side of \eqref{eq:HeatDouglas} we get:
\begin{align}
\left[ \mathrm{I} - \frac{\dt}{2} \hat{\Delta} \right. & + \left. \frac{\dt^2}{4} \left(\Drr \DHthetatheta + 
\Drr \DHphiphi + \DHthetatheta \DHphiphi \right) 
 - \frac{\dt^3}{8}\Drr \DHthetatheta \DHphiphi \right] \frac{\delta T^{n+1}}{\dt} 
 = \Delta T^{*,n+1/2} - \frac{1}{2} \hat{\Delta} \delta T^n.
\label{eq:StabProof1}
\end{align}
Rearranging all the $\Delta$ and $\hat{\Delta}$ terms, we obtain
\begin{align}
\frac{\delta T^{n+1}}{\dt} & - \frac{1}{2} \left[\Delta - \hat{\Delta} \right] \left(T^{n+1} - 2T^n + T^{n-1} \right)
- \LAP T^{n+1/2} \notag \\
 & + \left[ \frac{\dt^2 }{4} \left(\Drr \DHthetatheta + 
\Drr \DHphiphi + \DHthetatheta \DHphiphi \right) 
 - \frac{\dt^3 }{8}\Drr \DHthetatheta \DHphiphi \right]\frac{\delta T^{n+1}}{\dt} = 0.
 \label{eq:StabProof2}
\end{align}
Next we multiply \eqref{eq:StabProof2} by $v = \delta T^{n+1}$ and integrate by parts. Then the second term gives
\begin{align}
& - \frac{1}{2} \left( \left[\Delta - \hat{\Delta} \right] \left(T^{n+1} - 2T^n + T^{n-1} \right), T^{n+1}-T^{n} \right)_{\omega} \notag \\
& = \frac{1}{4} \left[ \| \partial_\theta \left( T^{n+1} - T^n \right)\|^2_{\omega_1} - 
\| \partial_\theta \left(  T^{n} - T^{n-1} \right) \|^2_{\omega_1} + \| \partial_\theta \left( T^{n+1} - 2T^n + T^{n-1} \right)\|^2_{\omega_1} \right]
 \label{eq:StabProof3} \\
& + \frac{1}{4} \left[ \| \dphi \left( T^{n+1} - T^n \right)\|^2_{\omega_2} - 
\| \dphi \left( T^{n} - T^{n-1} \right) \|^2_{\omega_2} + \| \dphi \left( T^{n+1} - 2T^n + T^{n-1} \right)\|^2_{\omega_2} \right]. \notag
\end{align}
The third term is 
\begin{align}
- \left( \Delta T^{n+1/2}, T^{n+1}-T^{n} \right)_{\omega}
= \frac{1}{2} \left( \| \nabla T^{n+1} \|_{\omega}^2 - \|\nabla T^n \|_{\omega}^2 \right).
 \label{eq:StabProof4}
\end{align}
The remaining terms are all dissipative:
\begin{align}
 \left( \Drr \DHthetatheta \delta T^{n+1}, \delta T^{n+1} \right)_{\omega}^2
=\int \limits_{\Omega} \frac{r^2 \sin \theta}{R^2_1} | \partial_{r \theta} \delta T^{n+1} |^2,
 \label{eq:StabProof5}
\end{align}
\begin{align}
 \left( \Drr \DHphiphi \delta T^{n+1}, \delta T^{n+1} \right)_{\omega}
= \int  \limits_{\Omega}  \frac{r^2 \sin \theta}{R^2_1 \sin^2 \theta_1} | \partial_{r \phi} \delta T^{n+1} |^2,
 \label{eq:StabProof6}
\end{align}
\begin{align}
 \left( \DHthetatheta \DHphiphi \delta T^{n+1}, \delta T^{n+1} \right)_{\omega}
= \int \limits_{\Omega}  \frac{r^2 \sin \theta}{R^4_1 \sin^2 \theta_1} | \partial_{\theta \phi} \delta T^{n+1} |^2,
 \label{eq:StabProof7}
\end{align}
and
\begin{align}
- \left( \Drr \DHthetatheta \DHphiphi \delta T^{n+1}, \delta T^{n+1} \right)_{\omega}
= \int  \limits_{\Omega}  \frac{r^2 \sin \theta}{R^4_1 \sin^2 \theta_1} | \partial_{r\theta \phi} \delta T^{n+1} |^2.
 \label{eq:StabProof8}
\end{align}
Substituting \eqref{eq:StabProof3}-\eqref{eq:StabProof8} into \eqref{eq:StabProof2}, and summing for $n=1, \dots,N-1$  completes the proof.
\end{proof}
The factorized scheme for the advection-diffusion equation \eqref{eq:eq_heat_convec} is obtained in a similar fashion and takes the following form:
\begin{align}
\left[ \mathrm{I} - \frac{\dt}{2} \left( \Drr - u_r^{n+1/2} \dr \right) \right]
\left[ \mathrm{I} \right. & - \left. \frac{\dt}{2} \left( \DHthetatheta - u_\theta^{n+1/2} \frac{\dtheta}{r} \right) \right]
\left[ \mathrm{I} - \frac{\dt}{2} \left( \DHphiphi - u_\phi^{n+1/2} \frac{\dphi}{r \sin \theta} \right) \right] \frac{\delta T^{n+1}}{\dt} \notag \\
& =  \Delta T^{*,n+1/2} - \frac{1}{2} \hat{\Delta} \delta T^n + \bu^{n+1/2} \cdot \nabla T^n.
\label{eq:Convect_Douglas}
\end{align}
\subsection{Direction-splitting discretization of the Navier-Stokes system}
Now we present the direction splitting scheme for the Navier-Stokes equations \eqref{eq:eq_ns}. 
Our numerical scheme is based on the AC regularization:
\begin{equation}
\begin{aligned}
\partial_t \bu_1 +(\bu_1 \cdot \GRAD) \bu_1 + \GRAD p_1 - \frac{1}{\Reynolds} \LAP \bu_1  &
= \mathbf{0} \\
\chi \dt \partial_t p_1 + \nabla \cdot \bu_1 &=0,
\end{aligned}
\label{eq:ac1}
\end{equation}
where $\chi = \mathcal{O}\left(1 \right)$ is an artificial compressibility regularization parameter, and $\Reynolds$ is the Reynolds number. 
It is well-known that the resulting approximation  $\left( \bu_1, p_1 \right)$ is first-order accurate in time (see \cite{Shen_1995}). 
A second order scheme can be constructed using the bootstrapping approach of \cite{doi:10.1137/140975231,Guermond201792}, which 
requires additionally to solve the system:
\begin{equation}
\begin{aligned}
\partial_t \bu_2 + (\bu_2 \cdot \GRAD) \bu_2 + \GRAD p_2 - \frac{1}{\Reynolds}\LAP \bu_2 &
= \mathbf{0}  \\
\chi \dt \partial_t \left( p_2 -  p_1 \right) + \nabla \cdot  \bu_2&=0,
\end{aligned}
\label{eq:ac2}
\end{equation}
$p_1$ being given by \eqref{eq:ac1}.
In the following, for the sake of brevity, we will only discuss the direction splitting implementation of the first order approximation \eqref{eq:ac1}. The higher order correction for $\bu_2, p_2$ is solved identically.
First, consider the standard semi-implicit Crank-Nicholson approximation of the system for $\left( \bu_1, p_1 \right)$:
\begin{align*}
\frac{\bu^{n+1}_1-\bu^{n}_1}{\dt} + \bu^{*,n+1/2}_2 \cdot \GRAD \bu^{n+1/2}_1 + \GRAD p_1^{n+1/2} - \frac{1}{\Reynolds} \LAP \bu_1^{n+1/2}&
= \mathbf{0} \\
\chi \left( p_1^{n+1}-p^n_1 \right) + \nabla \cdot \bu_1^{n+1/2} &=0
\end{align*}
Note that we use the second order velocity $\bu_2$ as advecting velocity, which allows us to assemble a single linear system for 
both systems. We can rewrite the momentum equation by eliminating $p^{n+1}_1$ from the first equation:
\begin{align*}
\frac{\bu^{n+1}_1-\bu^{n}_1}{\dt} + \bu^{*,n+1/2}_2 \cdot \GRAD \bu^{n+1/2}_1 + \GRAD p_1^n - \frac{1}{\Reynolds} \LAP \bu_1^{n+1/2}& - \frac{1}{2 \chi}\GRAD \DIV \bu_1^{n+1/2} 
= \mathbf{0} \\
p_1^{n+1} = p^n_1 & - \frac{1}{\chi}\nabla \cdot \bu_1^{n+1/2}.
\end{align*}
In order to produce a factorized scheme for each velocity component, the $\GRAD \DIV$ operator must be also split somehow, and we use the 
Gauss-Seidel type splitting of the $\GRAD \DIV$ operator, which was 
originally proposed in \cite{Guermond201792} in the  Cartesian case:
$$
\GRAD \DIV \bu^{n+1/2} \simeq 
\begin{pmatrix} 
\partial_r \biggl( \frac{\dr \left(r^2 u^{n+1/2}_r \right)}{r^2} + \frac{\dtheta \left( \sin \theta u_\theta^{*,n+1/2} \right)}{r \sin \theta} 
 + \frac{\dphi u_\phi^{*,n+1/2}}{r \sin \theta}  \biggr) \\
\\
\frac{\partial_\theta}{r} \biggl( \frac{\dr \left(r^2 u_r^{n+1/2} \right)}{r^2} + \frac{\dtheta \left( \sin \theta u^{n+1/2}_\theta \right)}{r \sin \theta} 
 + \frac{\dphi u_\phi^{*,n+1/2}}{r \sin \theta}  \biggr) \\
\\
\frac{\partial_\phi}{r \sin \theta} \biggl( \frac{\dr \left(r^2 u_r^{n+1/2} \right)}{r^2} + \frac{\dtheta \left( \sin \theta u_\theta^{n+1/2} \right)}{r \sin \theta} 
 + \frac{\dphi u^{n+1/2}_\phi}{r \sin \theta}  \biggr)
\end{pmatrix} :=
\begin{pmatrix}
D_{11} + D_{12} + D_{13} \\
D_{21} + D_{22} + D_{23} \\
D_{31} + D_{32} + D_{33} 
\end{pmatrix} \bu^{n+1/2}
$$
\subsubsection{Equation for the $r$-component of the velocity} 
Using the mass conservation equation $\nabla \cdot \bu = 0$, it is possible to write the first component of the system as follows:
\begin{equation*}
\partial_t u_r + \bu \cdot \nabla u_r - \frac{\LAP u_r}{\Reynolds} + \dr p 
+ \frac{1}{\Reynolds}\frac{2 u_r}{ r^2} - \frac{1}{\Reynolds} \frac{2}{r^3}\partial_r \left(u_r r^2 \right) - \frac{u^2_\theta + u^2_\phi}{r} = 0, 
\end{equation*}
where $\bu \cdot \GRAD v = u_r \dr v + u_\theta \frac{\dtheta v}{r} + u_\phi \frac{\dphi v}{r \sin \theta}$ is the advection operator.
Let $\Lrr, \Lrtheta$ and $\Lrphi$ be the differential operators that act in each space direction:  
$$\Lrr u  = \frac{1}{\Reynolds} \left( \Drr u  - \frac{2 u}{r^2} + \frac{2 \partial_r \left(r^2 u\right)}{r^3} \right) + D_{11}u - 
u^{*,n+1/2}_{2,r} \cdot \dr u_r, \Lrtheta u  = \left(\frac{\DHthetatheta }{\Reynolds} - u^{*,n+1/2}_{2,\theta} \cdot \frac{\dtheta }{r} \right)u,$$
$$\Lrphi u  = \left( \frac{\DHphiphi }{\Reynolds} - u^{*,n+1/2}_{2,\phi} \cdot \frac{\dphi }{r \sin \theta} \right)u \text{ and }
L_r = \Lrr + \Lrtheta + \Lrphi$$
The factorized scheme for the $r$-component takes the following form:
\begin{align}
\left[ \mathrm{I} - \frac{\dt}{2} \Lrtheta \right] 
\left[ \mathrm{I} - \frac{\dt}{2} \Lrphi \right]
\left[ \mathrm{I} - \frac{\dt}{2} \Lrr \right] \frac{u_{1,r}^{n+1}-u_{1,r}^{n}}{\dt} 
& = L_r u_{1,r}^{*,n+1/2} + \hat{\LAP} u^{n-1/2}_{1,r} - \dr p_1^{n} 
+ \frac{D_{12} u_{1,\theta}^{*,n+1/2} + D_{13}u_{1,\phi}^{*,n+1/2}}{2 \chi} \notag \\
& + \frac{ \left( u^{*,n+1/2}_\theta \right)^2 + \left( u^{*,n+1/2}_\phi \right)^2}{r}. \label{eq:r_comp}
\end{align}
\subsubsection{Equation for the $\theta$--component of the velocity}
Again using $\nabla \cdot \bu = 0$, the $\theta$-component of the momentum equation can be expressed as:
 \begin{align*}
\partial_t u_\theta + \bu \cdot \nabla u_\theta - \frac{\LAP u_\theta}{\mathrm{Re}} + \frac{\dtheta p}{r} 
+ \frac{1}{\Reynolds}\frac{u_\theta}{r^2 \sin^2 \theta} - \frac{2 \cos \theta}{\Reynolds}\frac{\dtheta \left(u_\theta \sin \theta \right)}{r^2 \sin^2 \theta} 
- \frac{2}{\Reynolds} \frac{\dtheta u_r}{r^2}  & - \frac{2 \cos \theta}{\Reynolds} \frac{\dr \left(u_r r^2 \right)}{r^3 \sin \theta} \notag \\
& + \frac{u_r u_\theta - u_\phi ^2 \cot \theta}{r}= 0. 
\end{align*}
Let $\Lthetar, \Lthetatheta$ and $\Lthetaphi$ be defined as follows:
$$\Lthetar u  = \left(\frac{\Drr }{\mathrm{Re}} - u^{*,n+1/2}_{2,r} \cdot \dr \right) u,   
\Lthetaphi u  = \left( \frac{\DHphiphi }{\mathrm{Re}} - u^{*,n+1/2}_{2,\phi} \cdot \frac{\dphi }{r \sin \theta} \right) u, $$
$$ \Lthetatheta u = \frac{1}{\Reynolds} \left( \DHthetatheta u - \frac{u}{r^2 \sin^2 \theta} +  
\frac{2 \cos \theta}{\sin \theta}\partial_\theta \left(u \sin \theta \right) \right) + \frac{u \cdot u^{*,n+1/2}_{2,\phi} \cot \theta}{r}
+ u^{*,n+1/2}_{2,\theta} \cdot \frac{\dtheta u}{r} + \frac{D_{22} u}{2 \chi}, $$
$$ \text{ and } L_\theta = \Lthetar + \Lthetatheta + \Lthetaphi$$
The factorized scheme for the $\theta$-component takes the following form:
\begin{align}
\left[ \mathrm{I} - \frac{\dt}{2} \Lthetaphi \right]
\left[ \mathrm{I} - \frac{\dt}{2} \Lthetar \right]
\left[ \mathrm{I} - \frac{\dt}{2} \Lthetatheta \right] \frac{u_{1,\theta}^{n+1}-u_{1,\theta}^{n}}{\dt} 
& = L_\theta u_{1,\theta}^{*,n+1/2} + \hat{\LAP} u^{n-1/2}_{1,\theta} - \frac{\dtheta p_1^{n}}{r} + \frac{D_{21} u_{1,r}^{n+1/2} + D_{23} u_{1,\phi}^{*,n+1/2}}{2 \chi} \notag \\
& + \frac{1}{\Reynolds} \left( \frac{2 }{r^2} \dtheta u_{1,r}^{n+1/2} + \frac{2 \cos \theta}{r^3 \sin \theta} \dr \left(u_{1,r}^{n+1/2} r^2 \right) \right) \label{eq:theta_comp} \\
& - \frac{u_r^{*,n+1/2} \cdot u^{*,n+1/2}_\phi}{r}. \notag
\end{align}
\subsubsection{Equation for the $\phi$--component of the velocity} 
The $\phi$-component of the momentum equation is given by:
 \begin{align*}
\partial_t u_\phi + \bu \cdot \nabla u_\phi + \frac{u_r u_\phi + u_\theta u_\phi \cot \theta}{r} - \frac{\LAP u_\phi}{\Reynolds} + 
\frac{\dphi p}{r \sin \theta} 
+ \frac{1}{\mathrm{Re}} \left( \frac{u_\phi}{r^2 \sin^2 \theta} - \frac{2 \cos \theta}{r^2 \sin^2 \theta}\partial_\phi u_\theta 
- \frac{2}{r^2 \sin \theta}\partial_\phi u_r \right) = 0 
 \end{align*}
 Let $\Lphir, \Lphitheta$ and $\Lphiphi$ be defined as follows:
$$\Lphir u  =  \left( \frac{\Drr}{\mathrm{Re}} - u^{*,n+1/2}_{2,r} \cdot \dr \right) u \text{ and }  
\Lphitheta u  =  \left(\frac{\DHthetatheta }{\Reynolds} - u^{*,n+1/2}_{2,\theta} \cdot \frac{\dtheta }{r} \right)u $$
$$ \Lphiphi u = \frac{1}{\Reynolds} \left(\DHphiphi - \frac{1}{r^2 \sin^2 \theta} \right)u - \frac{u_\phi^{*,n+1/2} \cdot u}{r \sin \theta}
- \frac{u_{2,r}^{*,n+1/2} + u^{*,n+1/2}_{2,\theta} \cot \theta}{r} u \text{ and } L_\phi = \Lphir + \Lphitheta + \Lphiphi $$
The factorized scheme for the $\phi$-component is then:
\begin{align}
& \left[ \mathrm{I} - \frac{\dt}{2} \Lphir \right]
\left[ \mathrm{I} - \frac{\dt}{2} \Lphitheta \right]
\left[ \mathrm{I} - \frac{\dt}{2} \Lphiphi \right]
\frac{ u_{1,\phi}^{n+1}-u_{1,\phi}^{n}}{\dt} = L_\phi u_{1,\phi}^{*,n+1/2} + \hat{\LAP} u^{n-1/2}_{1,\phi}
\notag \\
& - \frac{\dphi p_1^{n}}{r \sin \theta} + \frac{1}{\mathrm{Re}} \left(\frac{2}{r^2 \sin \theta} \dphi u_{1,r}^{n+1/2} + \frac{2 \cos \theta}{r^2 \sin^2 \theta} \dphi u_{1,\theta}^{n+1/2} \right) 
+ D_{31} u_{1,r}^{n+1/2} + D_{32} u_{1,\theta}^{n+1/2}. \label{eq:phi_comp}
\end{align}
\subsubsection{Pressure update}
\begin{equation}
p_1^{n+1} = p_1^{n} - \frac{1}{\chi}\DIV \bu_1^{n+1/2}. \label{eq:nst_Mass1}
\end{equation}

\section{Implementation and parallelization}
The equations \eqref{eq:Convect_Douglas}, \eqref{eq:r_comp}-\eqref{eq:phi_comp} are solved as a sequence of 1D equations in each space direction.
For example, solving \eqref{eq:Convect_Douglas} consists of the following steps:
\begin{align*}
 \frac{\xi^{n+1}}{\tau} & := \frac{1}{2} \Delta T^{*,n+1/2} - \frac{1}{2} \hat{\Delta} \delta T^n + \bu^{n+1/2} \cdot \nabla T^n \\
 \frac{\eta^{n+1}}{\tau} & := \left[ \mathrm{I} - \frac{\tau}{2} \DHthetatheta \right]
 \left[ \mathrm{I} - \frac{\tau}{2} \DHphiphi \right] \frac{T^{n+1}-T^n}{\tau} \Rightarrow 
 \left[ \mathrm{I} - \frac{\tau}{2} \Drr \right]\eta^{n+1} = \xi^{n+1} \\
 \frac{\zeta^{n+1}}{\tau} & := \left[ \mathrm{I} - \frac{\tau}{2} \DHphiphi \right] \frac{T^{n+1}-T^n}{\tau} \Rightarrow 
 \left[ \mathrm{I} - \frac{\tau}{2} \DHthetatheta \right]\zeta^{n+1} = \eta^{n+1} \\
 \left[ \mathrm{I} - \frac{\tau}{2} \DHphiphi \right] \left( T^{n+1} - T^n \right) & = \zeta^{n+1} \Rightarrow T^{n+1} = \left( T^{n+1}- T^n \right) + T^n.
\end{align*}
Similar strategy is applied for the Navier-Stokes approximation. 
Each 1D system is spatially approximated using second-order centered finite differences on a non-uniform grid. In order to ensure the inf-sup stability,
the unknowns are approximated on a MAC  grid, where the velocity components are stored at the face centers of the cells, while the scalar
variables are stored at the cell centers. 

To solve the system on each domain in parallel we use the approach developed in \cite{doi:10.1002/fld.2583}, where 
we first perform Cartesian domain decomposition of both computational grids using MPI, and then solve the resulting set of tridiagonal linear systems
using domain-decomposition-induced Schur complement technique. Note, that the Schur complement can be computed explicitly (see \cite{doi:10.1002/fld.2583} for details)
and so the system in each direction can be solved directly by the Thomas algorithm, avoiding the need of iterations on each of the two subdomain.
Then, in order to obtain the approximation on the entire spherical shell,  we iterate between the Yin and Yang grids using either additive or multiplicative 
overlapping Schwarz methods.  The solution on each grid is computed  using only boundary data  that is interpolated from the currently available 
solution on the other grid, using Lagrange interpolation.

In the additive Schwarz implementation, we use an even total number of CPUs. Then we split the global communicator
into two equal parts, and assign to each grid one of the communicators. In the multiplicative Schwarz implementation, we use the global 
communicator to solve the problem on each grid sequentially.  

The overall solution procedure in case of the multiplicative Schwarz iteration can be summarized as follows:
\begin{algorithm}
Repeat until convergence:	
\begin{itemize}
 \item[] For $i = 1,2$
    \begin{itemize}
      \item[1)] Obtain interpolated boundary values $T_{bd}$ for $\partial \Omega_i$ from $\Omega_{3-i}$.
      \item[2)] Solve the temperature equation in $\Omega_i$ with using extrapolated velocity values $\bu_2^{*,n+1/2}$.
      \item[3)] Obtain interpolated boundary values $\bu_{bd}$ for $\partial \Omega_i$ from $\Omega_{3-i}$.
      \item[4)] 
     If $\left| \bigintsss\limits_{\partial \Omega_i} \bu_{bd} \cdot \mathbf{n} \right| \ge \mathrm{tol}$, then minimize
      the functional ($\varepsilon \ll 1$):
      $$\mathrm{J}\left( \bv \right): = \frac{1}{2} \left|\bv - \bu_{bd} \right|^2_{\ell^2} 
      + \frac{1}{2 \varepsilon |\partial \Omega_i|^2} 
      \left| \bigintssss\limits_{\partial \Omega_i \cap \{\theta, \phi \text{ bdry }\}} \bv \cdot \mathbf{n} + 
      \bigintssss\limits_{\partial \Omega_i \cap \{r \text{ bdry }\}} \bu_{bd} \cdot \mathbf{n} \right|^2,$$
      using the Conjugate Gradient Algorithm until $\mathrm{J}\left( \cdot \right) \le \mathrm{tol}$.
      \item[5)] Update $\bu_{bd}: = \bv$ and solve the momentum equation in $\Omega_i$ with the interpolated Dirichlet boundary conditions in $\theta, \phi$ directions
      and with the original  boundary conditions in the $r$ direction. 
      \item[6)] Compute the pressure in $\Omega_i$ using the second equation in \eqref{eq:ac1}.
      \item[7)] Interpolate the pressure values at the boundary of $\partial \Omega_i$ using the available pressure on $\Omega_{3-i}$.
      \end{itemize}
\item[] End for.
\end{itemize}
\end{algorithm}

Step 4 is meant to ensure that there is no spurious mass flux generated 
through the internal (artificial) boundaries due to the interpolation. It is optional, and as our numerical experience shows, 
it rarely changes significantly the results.  Therefore, it is skipped while producing the numerical results presented in the next section.
 Skipping Step 7, however, can seriously reduce the rate of convergence of the Schwarz iteration, as observed in the numerical simulations. 
 Clearly, the AC method for the Navier-Stokes equations does not require boundary conditions on the pressure. Nevertheless, 
 the exchange of the pressure values does influence the pressure gradient that appears in \eqref{eq:r_comp}-\eqref{eq:phi_comp},
 and thus it seems to influence significantly the convergence of the overall iteration.
This effect is not well understood and while some other authors  (see for example \cite{TANG2003567}) also interpolate  the pressure values near the internal boundaries,
others (e.g. \cite{MERRILL201660} ) interpolate only the velocity on the internal boundaries.

Another interesting feature of the domain decomposition iteration described above  is that it allows   to use the
previously computed iterates in order to reduce the splitting error of the direction splitting approximation.  For example,
if the factorized form of the direction-split approximation for a given quantity $\psi$ is given by:
\[
(I-L_{\psi,r})(I-L_{\psi,\theta})(I-L_{\psi,\phi})(\psi^{n,k} - \psi^{n-1}) = G
\]
where the superscript $n$ denotes the time level of the solution and $k$ denotes the domain decomposition iteration level,
then the splitting error can be reduced by using the modified equation:
\begin{equation}
(I-L_{\psi,r})(I-L_{\psi,\theta})(I-L_{\psi,\phi})(\psi^{n,k} - \psi^{n,k-1}) = G+(\psi^{n,k-1}-\psi^{n-1})-L_{\psi} (\psi^{n,k-1}-\psi^{n-1}),
\label{eq:er}
\end{equation}
where $L_{\psi}= L_{\psi,r}+L_{\psi,\theta}+L_{\psi,\phi}$.
Indeed, in \eqref{eq:er}, the splitting error term $(L_{\psi,r}L_{\psi,\theta}+L_{\psi,r}L_{\psi,\phi}+L_{\psi,\theta}L_{\psi,\phi}-L_{\psi,r}L_{\psi,\theta}L_{\psi,\phi})(\psi^{n,k} - \psi^{n-1}) $
at iteration level $k$ has been reduced by the same term at the previous iteration level
$(L_{\psi,r}L_{\psi,\theta}+L_{\psi,r}L_{\psi,\phi}+L_{\psi,\theta}L_{\psi,\phi}-L_{\psi,r}L_{\psi,\theta}L_{\psi,\phi})(\psi^{n,k-1} - \psi^{n-1}) $.
If this error reduction is employed, then the iteration becomes a block-preconditioned overlapping domain decomposition iteration, 
the preconditioner being the factorized operator $(I-L_{\psi,r})(I-L_{\psi,\theta})(I-L_{\psi,\phi})$. We must also remark here that
this iteration needs to converge to an accuracy of the order of $\dt^2$ for the solution of equation \eqref{eq:ac1}
and $\dt^3$ for the solution of equation \eqref{eq:ac2}, in order to preserve the second order accuracy of the overall
algorithm.
\section{Numerical tests}
\subsection{Time and space convergence}
We verify the convergence rates in space and time using the following manufactured solution, given in a Cartesian form:
\begin{equation}
\bu = \cos(t) \left(2 x^2 y z, - x y^2 z, -x y z^2 \right)^\mathrm{T} , p = \cos(t) x y z, T = 2 \cos(t) x^2 y z.
\label{eq:Exact}
\end{equation}
The parameters used in this test  are $R_1=1, R_2=2, Ra=1, Pr=1$, and the grids used in the tests are uniform in each direction.
The convergence of the approximation is tested using both, the additive and multiplicative versions of the scheme. 
The grid used for the time convergence tests consists of $20\times 92 \times 192 $ MAC cells on each of the two
subdomains.  The solution error is computed at the final time $T_f=10$.
For the space convergence tests, the time step is chosen small enough to not influence the overall error, $\dt=0.0001$, and the final time is $T_f=1$.
The grid diameter is computed as the maximum diameter of the MAC cells in Cartesian coordinates.
In both cases the domain decomposition iterations are converged so that the $l^2$ norm of the difference between two subsequent
iterates, for any of the computed quantities is less than $10^{-6}$ ($l^2$ norm denotes the standard mid-point approximation to the $L^2$ norm).
Also, the splitting error reduction, as outlined by equation \eqref{eq:er}, is employed at each iteration.

The graphs of the $l^2$ norm of the errors in both cases are  presented in Figure \ref{fig:Conv_Additive_Schwarz}.
They clearly demonstrate  the second order accuracy of the scheme in space and time.

Next we verify the accuracy of the proposed algorithm on a physically more relevant analytic solution of the Navier-Stokes equations
in a spherical setting, due to Landau (see \cite{L44} and \cite{LYY18} for a recent review).  The source term of the equations
is equal to zero in this case, and the solution is steady and axisymmetric.  In all cases presented in figure \ref{fig:Conv_Landau} 
the multiplicative Schwarz version of the algorithm is used with its convergence tolerance being set to $10^{-6}$, the time step
is equal to $10^{-3}$, and $R_1=1, R_2=2$.
We first present in the top left graph of figure \ref{fig:Conv_Landau} the $l^2$ error for the velocity and pressure as a function of the grid diameter,
at $Re=1$ and the overlap is $\epsilon=0.1$
The scheme clearly exhibits again a second order convergence rate in space.  In the top right graph we demonstrate 
the influence of the overlap size on the error at $Re=1$, the grid size in the $r,\phi, \theta$ directions being 
$2.7778 \times 10^{-2},    1.7027 \times 10^{-2},    3.6121 \times 10^{-2}$ correspondingly.  
The effect of the overlap on the error is insignificant, however, it seriously impacts
the stability of the algorithm i.e. the increase of the overlap improves the stability, particularly at large Reynolds numbers.
Finally, the bottom graph demonstrates the effect of the Reynolds number  on the error.  Again, the overlap is $0.1$ and the grid sizes
are equal to $2.7778 \times 10^{-2},    1.7027 \times 10^{-2},    3.6121 \times 10^{-2}$ . We should note that the exact solution for the velocity scales
like $Re^{-1}$ and therefore the errors in the graph are multiplied by the corresponding Reynolds number.  Clearly, the oscillations in the error
decrease slower with the increase of the Reynolds number.  These oscillations are due to the artificial compressibility algorithm,
since the initial data for the pressure corresponds to a divergence-free velocity, while the pressure evolution is determined by
a perturbed continuity equation (see \cite{OA10} and \cite{DLM17} for a detailed discussion on this issue).

\begin{figure}[htbp]
\centering
\subfigure{\includegraphics[width=80mm]{./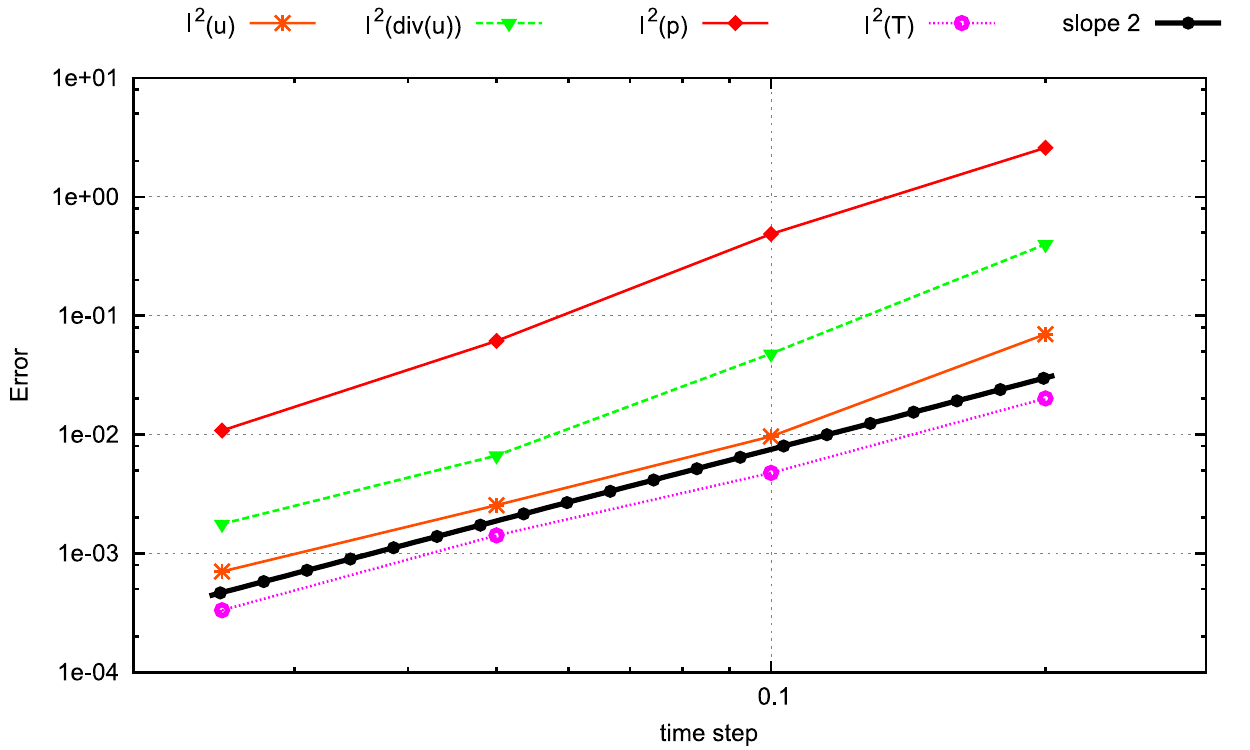}}
\hspace{0.01cm}
\subfigure{\includegraphics[width=80mm]{./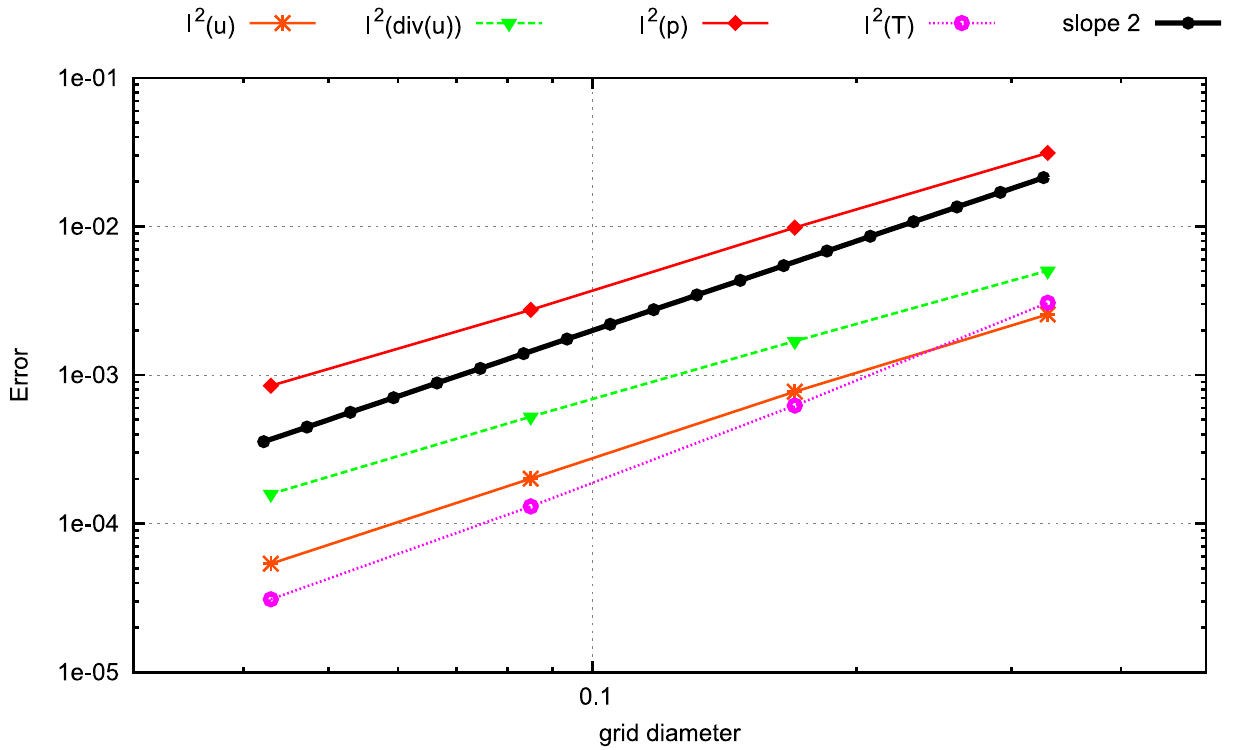}}
\caption{Log-log plot of the errors; multiplicative Schwarz iteration. Left graph contains the temporal errors at $T_f=10$, 
while the right graph contains the spatial error plotted at $T_f=1$; $R_1=1, R_2=2, Ra=1, Pr=1$.}
\label{fig:Conv_Additive_Schwarz}
\end{figure}
\begin{figure}[htbp]
\centering
\subfigure{\includegraphics[width=80mm]{./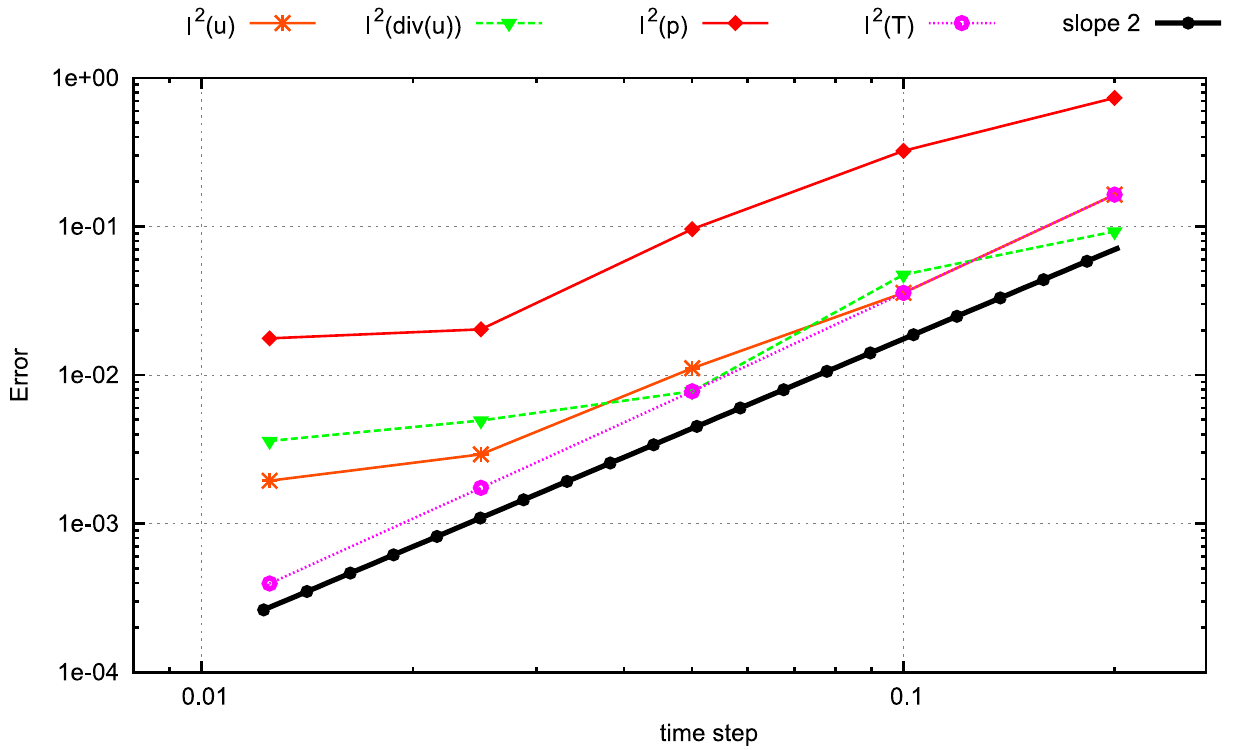}}
\hspace{0.01cm}
\subfigure{\includegraphics[width=80mm]{./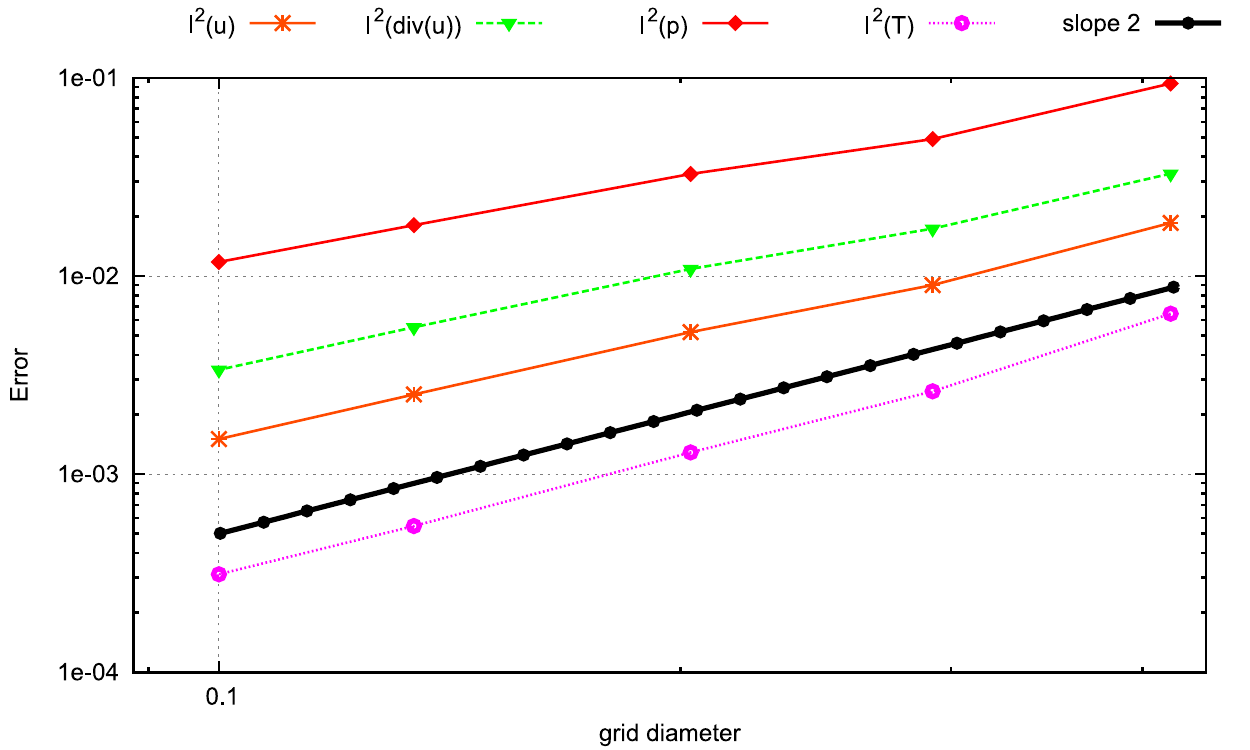}}
\caption{Log-log plot of the errors; additive Schwarz approach. Left graph contains the temporal errors at $T_f=10$, 
while the right graph contains the spatial error plotted at $T_f=1$; $R_1=1, R_2=2, Ra=1, Pr=1$.}
\label{fig:Conv_Multiplicative_Schwarz}
\end{figure}

\begin{figure}[htbp]
\centering
\subfigure{\includegraphics[width=80mm]{./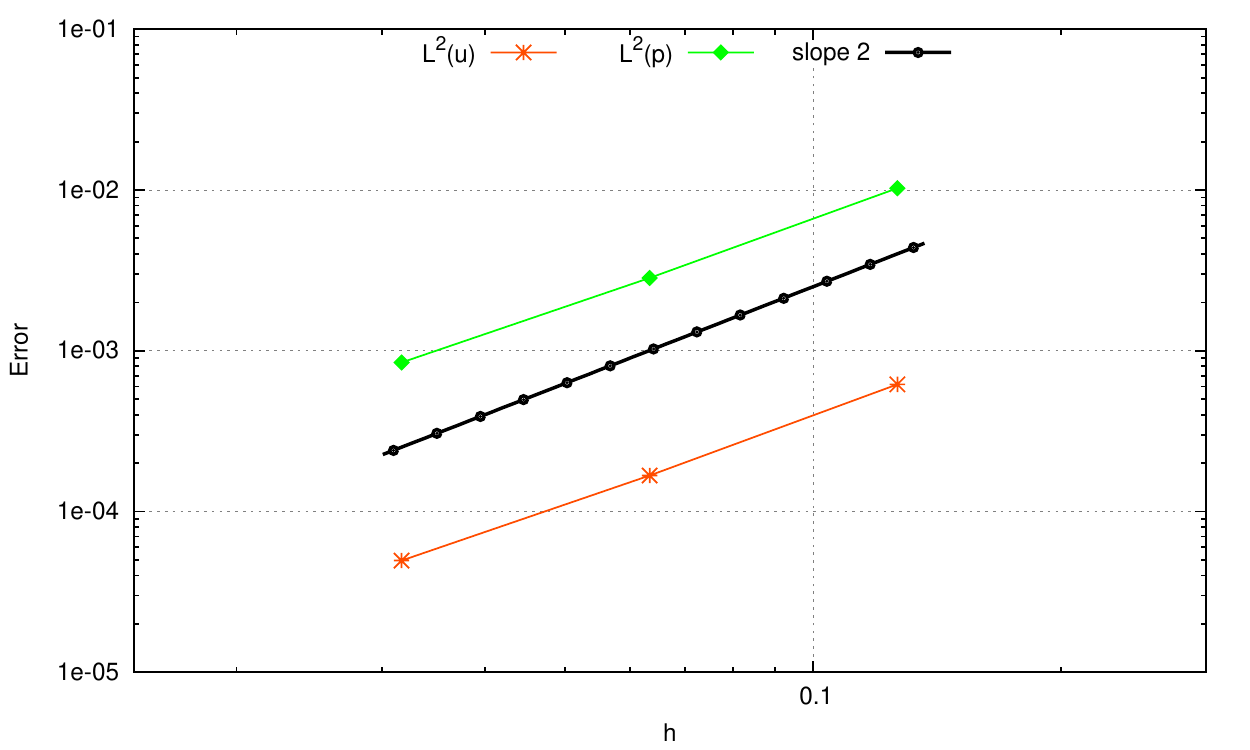}}
\hspace{0.01cm}
\subfigure{\includegraphics[width=80mm]{./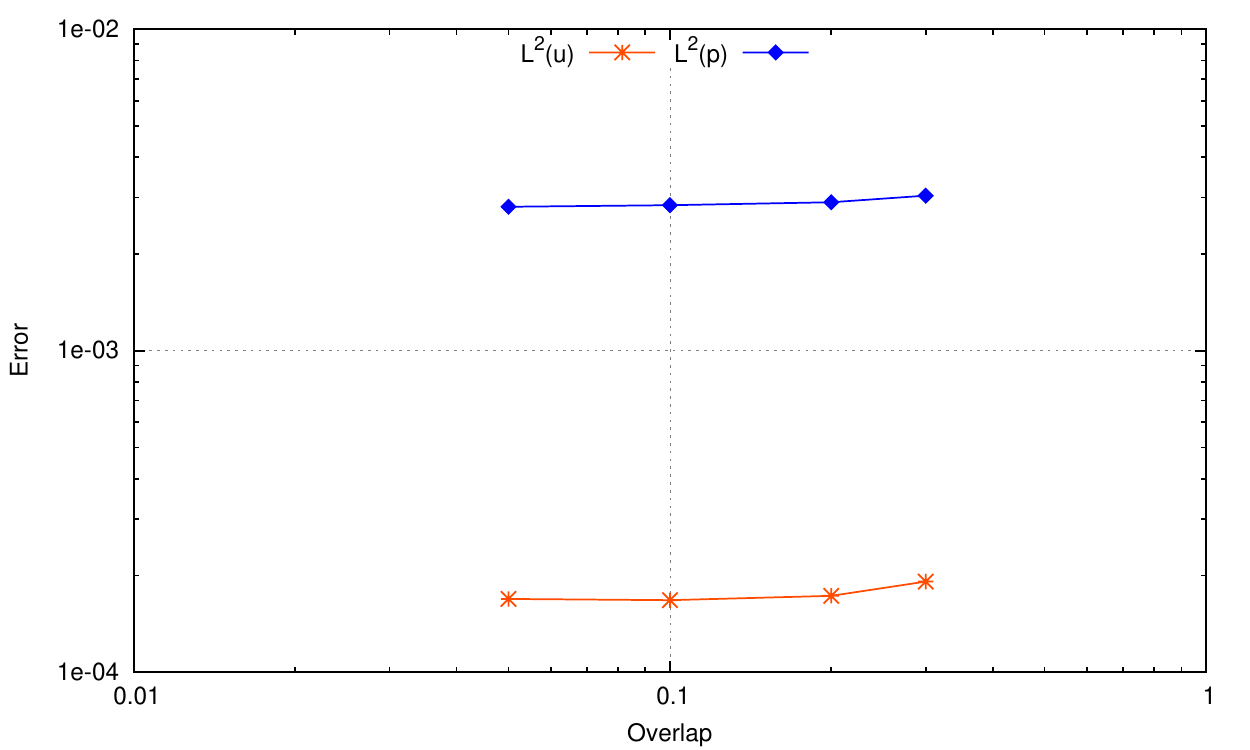}}
\\
\subfigure{\includegraphics[width=80mm]{./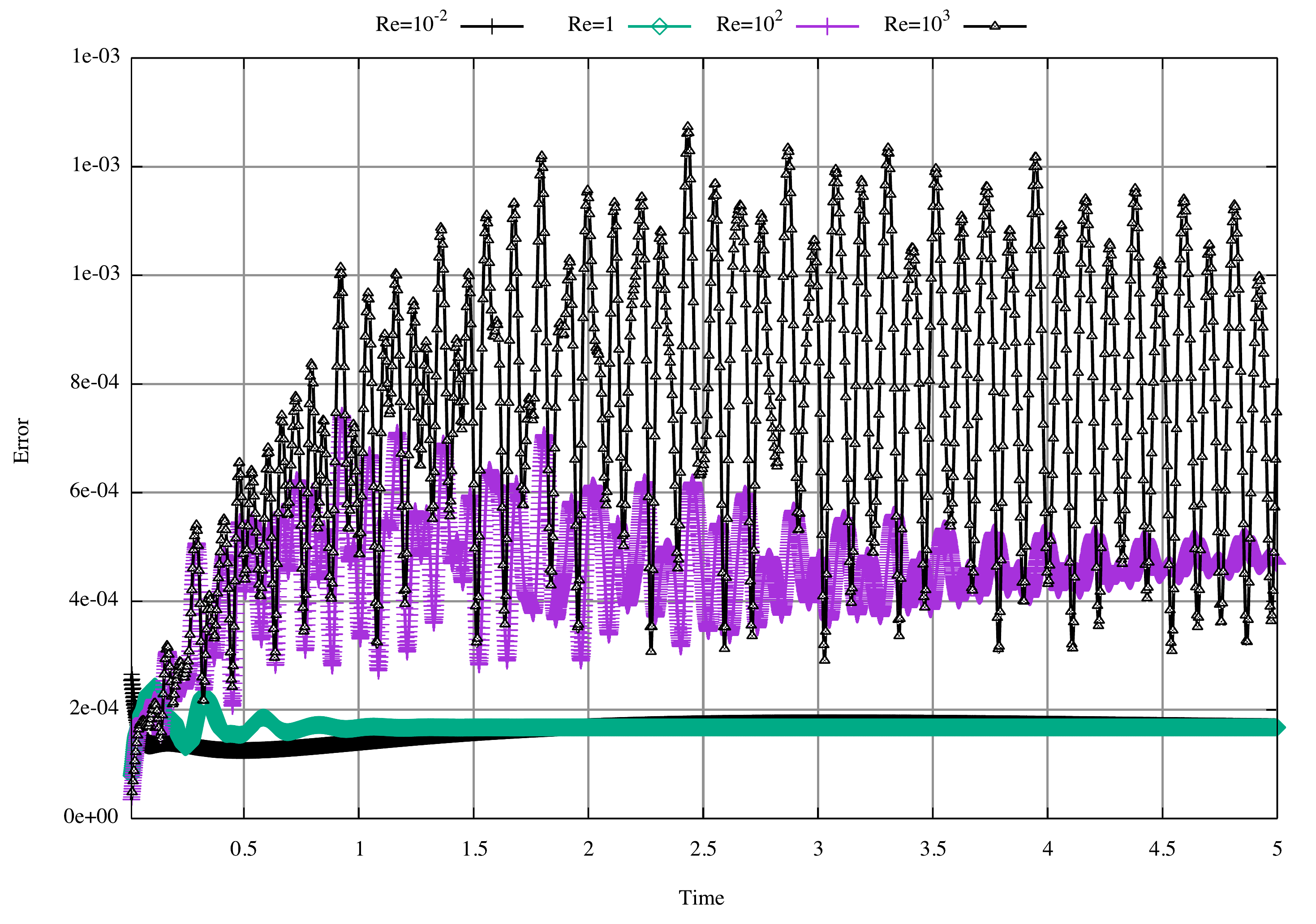}}
\caption{$l^2$ errors. Top left: convergence in space, Re=1. Top right: effect of the overlap on the error; Re=1.
Bottom: effect of the Reynolds number on the velocity error as a function of time. }
\label{fig:Conv_Landau}
\end{figure}
\subsection{Weak parallel efficiency}
Next we test the parallel efficiency of the code based on the scheme introduced in the previous section. Since we are interested in solving large size problems, we 
only measure the weak scalability of our code. The problem size is $100\times 100 \times 100$ grid cells per each of the Yin and Yang  grids on each CPU,
and the maximum number of CPUs used is $960$.
Besides, since in the possible applications of this technique (atmospheric boundary layer, Earth's dynamo)
 the thickness of the spherical shell is much smaller than the diameter of the shell, we use a two-dimensional grid of processors for the grid partitioning.
 It must be noted though, that making the grid partitioning three dimensional does not change much the parallel efficiency results presented in this section.
 The scaling efficiency is computed as the ratio of the CPU time on 32 cores divided by the CPU time on $n\geq32$ cores. The reason for this definition
 of efficiency is that the particular cluster used in the scaling tests has processors containing 32 cores each, and the efficiency drops very significantly between
 1 and 32 cores (to about 75\%). After this, when the number of cores is a multiple of 32 the efficiency remains very close to the one at 32 cores.  One possible
 explanation of this phenomenon is that in case when the number of cores is significantly less than 32 cores, they need to share the memory bandwidth and cache with a smaller 
 number of cores, since presumably the rest of the available cores on the given processor are idle (see e.g. \cite{Keating}, p. 152).   Again, we are interested in very large computations, and therefore,
 using a minimum of  32 cores is very reasonable.
 
 The scaling results are performed using the Compute Canada (see https://www.computecanada.ca/) Graham cluster of 
$2.1$GHz Intel $E5-2683$ v4 CPU cores, $32$ cores per node, and each node connected via a $100$ Gb/s network. 
The results were calculated using the wall clock time taken to simulate $10$ time steps.
 We ran two tests, using a fixed number of $1$ and $10$ domain decomposition iterations, and we present the scaling results in  Fig. \ref{fig:Scaling}.
 The parallel efficiency is very slightly dependent on the number of domain decomposition iterations, and remains above 90\% for the number of cores ranging 
 between 32 and 960 (the maximum allocatable without a special permission on the particular cluster).  In our opinion, this is an excellent scaling result
 for an implicit scheme for the incompressible Boussinesq equations.
%
\begin{figure}[htbp]
\centering
\includegraphics[width=100mm]{./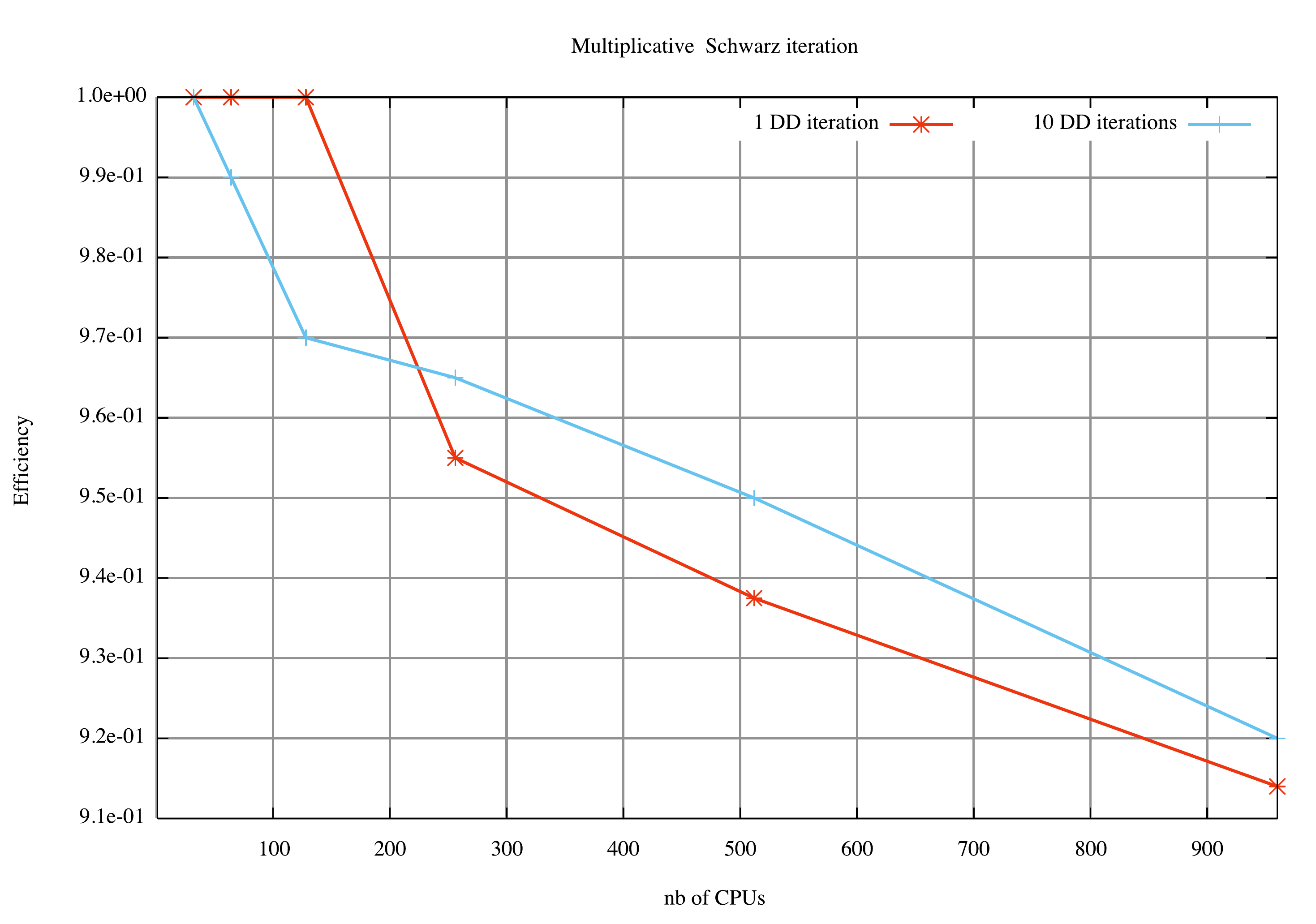}
\caption{Parallel scalability using up to $960$ CPU cores}
\label{fig:Scaling}
\end{figure}
%
\section*{Acknowledgments}
The authors would like to acknowledge the support, under a Discovery Grant,
of the National Science and Engineering Research Council of Canada (NSERC).

This research was enabled in part by support provided by Compute Canada (www.computecanada.ca).

\section*{References}

\bibliography{references}

\begin{thebibliography}{10}
\expandafter\ifx\csname url\endcsname\relax
  \def\url#1{\texttt{#1}}\fi
\expandafter\ifx\csname urlprefix\endcsname\relax\def\urlprefix{URL }\fi
\expandafter\ifx\csname href\endcsname\relax
  \def\href#1#2{#2} \def\path#1{#1}\fi

\bibitem{doi:10.1137/140975231}
J.~Guermond, P.~Minev, \href{https://doi.org/10.1137/140975231}{High-order time
  stepping for the incompressible {N}avier--{S}tokes equations}, SIAM Journal
  on Scientific Computing 37~(6) (2015) A2656--A2681.
\newblock \href {http://arxiv.org/abs/https://doi.org/10.1137/140975231}
  {\path{arXiv:https://doi.org/10.1137/140975231}}, \href
  {http://dx.doi.org/10.1137/140975231} {\path{doi:10.1137/140975231}}.
\newline\urlprefix\url{https://doi.org/10.1137/140975231}

\bibitem{Guermond201792}
J.-L. Guermond, P.~D. Minev, High-order time stepping for the {Navier}-{Stokes}
  equations with minimal computational complexity, J. Comput. Appl. Math. 310
  (2017) 92 -- 103.

\bibitem{MKMMKVGHJ2016}
S.~Marras, J.~Kelly, M.~Moragues, A.~Muller, M.~Kopera, M.~Vazquez, F.~Giraldo,
  G.~Houzeaux, O.~Jorba, A review of element-based {G}alerkin methods for
  numerical weather prediction: Finite elements, spectral elements, and
  discontinuous {G}alerkin, Arch. Computat. Methods Eng. 23 (2016) 673--722.

\bibitem{SH2006}
Y.~Song, T.~Hou, Parametric vertical coordinate formulation for multiscale,
  {B}oussinesq, and non-{B}oussinesq ocean modeling, Ocean Modelling 11 (2006)
  298--332.

\bibitem{SJNF2017}
N.~Schaeffer, D.~Jault, H.-C. Nataf, A.~Furnier, Turbulent geodynamo
  simulations: a leap towards {E}arth's core, Geophysical Journal
  International 211~(1) (2017) 1--29.

\bibitem{HUANG1993254}
W.~Huang, D.~M. Sloan,
  \href{http://www.sciencedirect.com/science/article/pii/S0021999183711411}{Pole
  condition for singular problems: The pseudospectral approximation}, J.
  Comput. Phys. 107~(2) (1993) 254 -- 261.
\newblock \href {http://dx.doi.org/https://doi.org/10.1006/jcph.1993.1141}
  {\path{doi:https://doi.org/10.1006/jcph.1993.1141}}.
\newline\urlprefix\url{http://www.sciencedirect.com/science/article/pii/S0021999183711411}

\bibitem{MOHSENI2000787}
K.~Mohseni, T.~Colonius,
  \href{http://www.sciencedirect.com/science/article/pii/S0021999199963829}{Numerical
  treatment of polar coordinate singularities}, J. Comput. Phys. 157~(2) (2000)
  787 -- 795.
\newblock \href {http://dx.doi.org/https://doi.org/10.1006/jcph.1999.6382}
  {\path{doi:https://doi.org/10.1006/jcph.1999.6382}}.
\newline\urlprefix\url{http://www.sciencedirect.com/science/article/pii/S0021999199963829}

\bibitem{GRIFFIN1979352}
M.~D. Griffin, E.~Jones, J.~D. Anderson,
  \href{http://www.sciencedirect.com/science/article/pii/0021999179901207}{A
  computational fluid dynamic technique valid at the centerline for
  non-axisymmetric problems in cylindrical coordinates}, J. Comput. Phys.
  30~(3) (1979) 352 -- 360.
\newblock \href
  {http://dx.doi.org/https://doi.org/10.1016/0021-9991(79)90120-7}
  {\path{doi:https://doi.org/10.1016/0021-9991(79)90120-7}}.
\newline\urlprefix\url{http://www.sciencedirect.com/science/article/pii/0021999179901207}

\bibitem{doi:10.2514/6.1997-760}
P.~M. J.~Freund, S.~Lele,
  \href{https://arc.aiaa.org/doi/abs/10.2514/6.1997-760}{Direct simulation of a
  supersonic round turbulent shear layer}, AIAA paper~(97) (1997) 0760.
\newline\urlprefix\url{https://arc.aiaa.org/doi/abs/10.2514/6.1997-760}

\bibitem{RONCHI199693}
C.~Ronchi, R.~Iacono, P.~Paolucci,
  \href{http://www.sciencedirect.com/science/article/pii/S0021999196900479}{The
  cubed sphere: A new method for the solution of partial differential equations
  in spherical geometry}, J. Comput. Phys/ 124~(1) (1996) 93 -- 114.
\newblock \href {http://dx.doi.org/https://doi.org/10.1006/jcph.1996.0047}
  {\path{doi:https://doi.org/10.1006/jcph.1996.0047}}.
\newline\urlprefix\url{http://www.sciencedirect.com/science/article/pii/S0021999196900479}

\bibitem{Baumgardner1985}
J.~R. Baumgardner, \href{https://doi.org/10.1007/BF01008348}{Three-dimensional
  treatment of convective flow in the earth's mantle}, Journal of Statistical
  Physics 39~(5) (1985) 501--511.
\newblock \href {http://dx.doi.org/10.1007/BF01008348}
  {\path{doi:10.1007/BF01008348}}.
\newline\urlprefix\url{https://doi.org/10.1007/BF01008348}

\bibitem{doi:10.1029/2000JB900003}
S.~Zhong, M.~T. Zuber, L.~Moresi, M.~Gurnis,
  \href{https://agupubs.onlinelibrary.wiley.com/doi/abs/10.1029/2000JB900003}{Role
  of temperature-dependent viscosity and surface plates in spherical shell
  models of mantle convection}, Journal of Geophysical Research: Solid Earth
  105~(B5)  11063--11082.
\newblock \href
  {http://arxiv.org/abs/https://agupubs.onlinelibrary.wiley.com/doi/pdf/10.1029/2000JB900003}
  {\path{arXiv:https://agupubs.onlinelibrary.wiley.com/doi/pdf/10.1029/2000JB900003}},
  \href {http://dx.doi.org/10.1029/2000JB900003}
  {\path{doi:10.1029/2000JB900003}}.
\newline\urlprefix\url{https://agupubs.onlinelibrary.wiley.com/doi/abs/10.1029/2000JB900003}

\bibitem{doi:10.1029/2004GC000734}
A.~Kageyama, T.~Sato,
  \href{https://agupubs.onlinelibrary.wiley.com/doi/abs/10.1029/2004GC000734}{“{Yin-Yang}
  grid”: An overset grid in spherical geometry}, Geochemistry, Geophysics,
  Geosystems 5~(9).
\newblock \href
  {http://arxiv.org/abs/https://agupubs.onlinelibrary.wiley.com/doi/pdf/10.1029/2004GC000734}
  {\path{arXiv:https://agupubs.onlinelibrary.wiley.com/doi/pdf/10.1029/2004GC000734}},
  \href {http://dx.doi.org/10.1029/2004GC000734}
  {\path{doi:10.1029/2004GC000734}}.
\newline\urlprefix\url{https://agupubs.onlinelibrary.wiley.com/doi/abs/10.1029/2004GC000734}

\bibitem{TACKLEY20087}
P.~J. Tackley,
  \href{http://www.sciencedirect.com/science/article/pii/S0031920108002276}{Modelling
  compressible mantle convection with large viscosity contrasts in a
  three-dimensional spherical shell using the yin-yang grid}, Physics of the
  Earth and Planetary Interiors 171~(1) (2008) 7 -- 18, recent Advances in
  Computational Geodynamics: Theory, Numerics and Applications.
\newblock \href {http://dx.doi.org/https://doi.org/10.1016/j.pepi.2008.08.005}
  {\path{doi:https://doi.org/10.1016/j.pepi.2008.08.005}}.
\newline\urlprefix\url{http://www.sciencedirect.com/science/article/pii/S0031920108002276}

\bibitem{refId0}
{Wongwathanarat, A.}, {M\"uller, E.}, {Janka, H.-Th.},
  \href{https://doi.org/10.1051/0004-6361/201425025}{Three-dimensional
  simulations of core-collapse supernovae: from shock revival to shock
  breakout}, A\&A 577 (2015) A48.
\newblock \href {http://dx.doi.org/10.1051/0004-6361/201425025}
  {\path{doi:10.1051/0004-6361/201425025}}.
\newline\urlprefix\url{https://doi.org/10.1051/0004-6361/201425025}

\bibitem{doi:10.1175/2010MWR3375.1}
Y.~Baba, K.~Takahashi, T.~Sugimura, K.~Goto,
  \href{https://doi.org/10.1175/2010MWR3375.1}{Dynamical core of an atmospheric
  general circulation model on a {Yin–Yang} grid}, Monthly Weather Review
  138~(10) (2010) 3988--4005.
\newblock \href {http://arxiv.org/abs/https://doi.org/10.1175/2010MWR3375.1}
  {\path{arXiv:https://doi.org/10.1175/2010MWR3375.1}}, \href
  {http://dx.doi.org/10.1175/2010MWR3375.1} {\path{doi:10.1175/2010MWR3375.1}}.
\newline\urlprefix\url{https://doi.org/10.1175/2010MWR3375.1}

\bibitem{OHNO20091534}
N.~Ohno, A.~Kageyama,
  \href{http://www.sciencedirect.com/science/article/pii/S0010465509001180}{Visualization
  of spherical data by {Yin–Yang} grid}, Computer Physics Communications
  180~(9) (2009) 1534 -- 1538.
\newblock \href {http://dx.doi.org/https://doi.org/10.1016/j.cpc.2009.04.008}
  {\path{doi:https://doi.org/10.1016/j.cpc.2009.04.008}}.
\newline\urlprefix\url{http://www.sciencedirect.com/science/article/pii/S0010465509001180}

\bibitem{Douglas1962}
J.~Douglas, Alternating direction methods for three space variables, Numerische
  Mathematik 4~(1) (1962) 41--63.
\newblock \href {http://dx.doi.org/10.1007/BF01386295}
  {\path{doi:10.1007/BF01386295}}.

\bibitem{Shen_1995}
J.~Shen, On error estimates of the penalty method for unsteady
  {N}avier-{S}tokes equations, SIAM J. Numer. Anal. 32~(2) (1995) 386--403.

\bibitem{doi:10.1002/fld.2583}
J.~L. Guermond, P.~D. Minev,
  \href{https://onlinelibrary.wiley.com/doi/abs/10.1002/fld.2583}{Start-up flow
  in a three-dimensional lid-driven cavity by means of a massively parallel
  direction splitting algorithm}, International Journal for Numerical Methods
  in Fluids 68~(7)  856--871.
\newblock \href
  {http://arxiv.org/abs/https://onlinelibrary.wiley.com/doi/pdf/10.1002/fld.2583}
  {\path{arXiv:https://onlinelibrary.wiley.com/doi/pdf/10.1002/fld.2583}},
  \href {http://dx.doi.org/10.1002/fld.2583} {\path{doi:10.1002/fld.2583}}.
\newline\urlprefix\url{https://onlinelibrary.wiley.com/doi/abs/10.1002/fld.2583}

\bibitem{TANG2003567}
F.~S. H.S.~Tang, S. Casey~Jones,
  \href{http://www.sciencedirect.com/science/article/pii/S0021999103003310}{An
  overset-grid method for 3d unsteady incompressible flows}, J. Comput. Phys.
  191~(2) (2003) 567 -- 600.
\newblock \href
  {http://dx.doi.org/https://doi.org/10.1016/S0021-9991(03)00331-0}
  {\path{doi:https://doi.org/10.1016/S0021-9991(03)00331-0}}.
\newline\urlprefix\url{http://www.sciencedirect.com/science/article/pii/S0021999103003310}

\bibitem{MERRILL201660}
B.~Merrill, Y.~Peet, P.~Fischer, J.~Lottes, A spectrally accurate method for
  overlapping grid solution of incompressible {N}avier–{S}tokes equations, J.
  Comput. Phys. 307 (2016) 60 -- 93.
\newblock \href {http://dx.doi.org/https://doi.org/10.1016/j.jcp.2015.11.057}
  {\path{doi:https://doi.org/10.1016/j.jcp.2015.11.057}}.

\bibitem{L44}
L.~Landau, A new exact solution of the {N}avier-{S}tokes equations, C.R. Acad.
  Sci. USSR 43 (1944) 286--295.

\bibitem{LYY18}
L.~Li, Y.~Li, X.~Yan, Homogeneous solutions of stationary {N}avier--{S}tokes
  equations with isolated singularities on the unit sphere. {I}. {O}ne
  singularity, Arch. Rational Mech. Anal. 227 (2018) 1091--1163.

\bibitem{OA10}
T.~Ohwada, P.~Asinari, Artificial compressibility method revisited:
  {A}symptotic numerical method for incompressible {N}avier--{S}tokes
  equations, J. Comput. Phys. 229 (2010) 1698--1723.

\bibitem{DLM17}
V.~DeCaria, W.~Layton, M.~McLaughlin, A conservative, second order,
  unconditionally stable artificial compression method, Comput. Methods Appl.
  Mech. Engrg. 325 (2017) 733--747.

\bibitem{Keating}
J.~W. Keating, Direction-splitting schemes for particulate flows, Ph{D} thesis,
  University of Alberta, Edmonton, http://hdl.handle.net/10402/era.33972.
  (2013).

\end{thebibliography}

\end{document}